\newtheorem{thm}{Theorem}
\newtheorem{lem}{Lemma}
\newtheorem*{thm*}{Theorem}
\newtheorem*{prop*}{Proposition}
\newtheorem*{cor*}{Corollary}
\theoremstyle{definition}
\theoremstyle{definition}
\newtheorem{rmk}{Remark}
\theoremstyle{definition}
\begin{document}

\author{Hanno von Bodecker\footnote{Fakult{\"a}t f{\"u}r Mathematik, Ruhr-Universit{\"a}t Bochum, 44780 Bochum, Germany}}

\title{The beta family at the prime two and modular forms of level three}

\date{}

\maketitle

\begin{abstract}{We use the orientation underlying the Hirzebruch genus of level three to map the beta family at the prime $p=2$ into the ring of divided congruences. This procedure, which may be thought of as the elliptic greek letter beta construction, yields the $f$--invariants of this family.}
\end{abstract}

\section{Introduction and statement of the results}


One of the most fundamental problems in pure mathematics is to understand the structure of the stable homotopy groups of the sphere $\pi_*S^0$, and the Adams--Novikov spectral sequence (ANSS) serves as a powerful tool to attack this problem: Working locally at a fixed prime $p$, we have
$$\textrm{E}_{2}^{s,t}=\textrm{Ext}^{s,t}_{BP_*BP}\left(BP_*,BP_*\right)\Rightarrow\left(\pi_{t-s}S^0\right)_{\left(p\right)},$$
and much insight can be gained by resolving its $E_2$--term into $v_n$--periodic components \cite{Ravenel:2004xh}. In their seminal paper propagating this chromatic approach, Miller, Ravenel, and Wilson introduced the so-called greek letter map, and computed the 1--line (for all primes) and the 2--line (for odd primes), generated by the alpha and beta families, respectively \cite{Miller:1977ya}. The computation of the 2--line for $p=2$ is due to Shimomura \cite{Shimomura:1981oy}: Let us concentrate on the beta elements at $p=2$ (there are also products of  $\alpha$'s): Starting from certain elements $x_i\in v_2^{-1}BP_*$, $y_i\in v_1^{-1}BP_*$, put 
$$a_0=1,\ a_1=2,\ a_k=3\cdot2^{k-1}\ k\geq2;$$
then, for $n\geq0$, odd $s\geq1$, $j\geq1$, $i\geq0$, subject to the conditions
$$n\geq i,\ 2^i|j,\ j\leq a_{n-i},\textrm{ and }j\leq2^n\textrm{ if }s=1 {\textrm{ and }} i=0,$$
the simplest beta elements are given by
\begin{equation}\label{simple_betas}
\beta_{s\cdot2^n/j,i+1}=\eta\left({x_n^s}/{2^{i+1}v_1^j}\right),
\end{equation}
where $\eta$ is the universal greek letter map. In fact, it is sometimes possible to improve divisibility, viz.\ for $n$, $s$, $j$, and $i$ as above with the additional conditions that
$$n\geq i+1\geq2, j=2\mbox{ and }s\geq3\mbox{ if }n=2,\mbox{ and }j\leq a_{n-i-1}\mbox{ if }n\geq3,$$
Shimomura defines
\begin{equation}\label{higher_beta}
\beta_{s\cdot2^n/j,i+2}=\eta\left({x_n^s}/{2^{i+2}y_i^m}\right)\quad\mbox{where } m=j/2^i,
\end{equation}
and shows the following relations between the betas in \eqref{simple_betas} and \eqref{higher_beta}:
\begin{itemize}
\item[(i)]{$\beta_{s\cdot2^n/j,i+2}=\beta_{s\cdot2^n/j,\left(i+1\right)+1}$ if $2^{i+1}|j$,}
\item[(ii)]{$2\beta_{s\cdot2^n/j,i+2}=\beta_{s\cdot2^n/j,i+1}$}.
\end{itemize}


There are striking number-theoretical patterns lurking in the stable stems which become visible from the chromatic point of view, e.g.\ the (nowadays) well-known relation between the 1--line and the (denominators of the) Bernoulli numbers. Concerning the 2--line,  Behrens has established a precise relation between the beta family for primes $p\geq5$ and the existence of modular forms satisfying appropriate congruences \cite{Behrens:2009dz}. On the other hand, using an injection of the 2--line into the ring of divided congruences (tensored with $\mathbb{Q/Z}$), Laures introduced the $f$--invariant as a higher analog of the $e$--invariant \cite{Laures:1999sh}. Subsequent work   has shown how these approaches can be merged and used to derive the $f$--invariant of the beta family, albeit still only for $p\geq5$ \cite{Behrens:2008fp}. A  different route has been taken in \cite{Hornbostel:2007ss}, where the $f$--invariant is represented using Artin--Schreier theory; however, although no longer limted to primes $p\geq5$, the calculations actually carried out in that reference only take care of two subfamilies (viz.\ $\beta_t$ for $p\nmid t$ and $\beta_{s2^n/2^n}$).

 Since there has been some progress on our geometrical understanding of the $f$--invariant through analytical techniques (to an extent where explicit calculations can be done, cf.\ e.g.\ \cite{Bodecker:2008pi}) it is desirable to have some sort of `comparison table'; to this end, we compute the $f$--invariant of the beta family\footnote{The situation of   products of   permanent alpha elements has been studied in  \cite{Bodecker:2009kx}.} at the prime $p=2$. More precisely, we take a look at the following diagram (at the level $N=3$, i.e.\ $\Gamma=\Gamma_1(3)$):
\begin{equation}\label{the_diagram}
\xymatrix{
\textrm{Ext}^0\left(BP_*,BP_*/\left(p^{\infty},v_1^{\infty}\right)[v_2^{-1}]\right)\ar[rr]\ar[d]& &\textrm{Ext}^{2,*}\left(BP_*,BP_*\right)\ar[d]\\
\textrm{Ext}^0\left(E^{\Gamma}_*,E^{\Gamma}_*/\left(p^{\infty},v_1^{\infty}\right)\right)\ar[rr]\ar@{.>}[drr]& &\textrm{Ext}^{2,*}\left(E^{\Gamma}_*,E^{\Gamma}_*\right)\ar[d]\\
 & &{\underline{\underline{D}}_{*+1}^{\Gamma}\otimes\mathbb{Q/Z}}\\}
 \end{equation}  
The composition of the vertical arrows on the RHS (which are injective by the results of \cite{Laures:1999sh}) accounts for the algebraic portion of the $f$--invariant, while the upper horizontal arrow produces the beta family. So, in order to compute the $f$--invariant of a member of this family, we  chase its preimage through the composition of the vertical arrow on the LHS and the dotted arrow; put differently, we carry out (a sufficiently large portion of) the elliptic greek letter construction explicitly. The result can be summarized as follows (where, as usual, we abbreviate $\beta_{k/j}=\beta_{k/j,1}$ and $\beta_k=\beta_{k/1}$):


\begin{thm}\label{order-two}
The $f$--invariants of the beta elements of order two are
\begin{itemize}
\item[\textup{(i)}] for odd $s\geq3$: 
$$f\left(\beta_{s}\right)\equiv\frac{1}{2}\left(\frac{E_1^2-1}{4}\right)^{s}\mod\underline{\underline{D}}_{3s-1}^{\Gamma}$$
\item[\textup{(ii)}] for odd $s\geq1$:
 $$f\left(\beta_{2s/j}\right)\equiv\frac{1}{2}\left(\frac{E_1^2-1}{4}\right)^{2s}\mod\underline{\underline{D}}_{6s-j}^{\Gamma}$$
\item[\textup{(iii)}] for  $l\geq0$ and odd $s\geq1$:
\begin{equation*}
\begin{split}
f\left(\beta_{4s\cdot2^l/j}\right)&\equiv\frac{1}{2}\left(\frac{E_1^2-1}{4}\right)^{4s\cdot2^l}+\frac{1}{2}\left(\frac{E_1^2-1}{4}\right)^{\left(4s-1\right)2^l}\mod\underline{\underline{D}}^{\Gamma}_{12s\cdot2^l-j}\\
&\equiv\frac{1}{2}\left(\frac{E_1^2-1}{4}\right)^{4s\cdot2^l}\textrm{{\em\ if }}j\leq3\cdot2^l\\
\end{split}
\end{equation*}
\end{itemize}
\end{thm}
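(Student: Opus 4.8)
\emph{Strategy.} I would make diagram \eqref{the_diagram} completely explicit for the classes \eqref{simple_betas} with $i=0$ --- the beta elements of order two --- and read off the resulting divided congruences. Three ingredients enter. First, a concrete model of the level--three theory: $E^{\Gamma}_*$ is the $2$--localized ring of modular forms for $\Gamma_1(3)$, polynomial on a weight--one class $E_1$ and a weight--three class, equipped with the formal group of the universal Weierstrass curve $y^2+a_1xy+a_3y=x^3$ with its canonical point of order three. From the duplication formula on this curve one extracts the images of the Hazewinkel generators: $v_1\mapsto E_1$ (whose mod--$2$ reduction is the Hasse invariant) and $v_2\mapsto\bar v_2$, a weight--three form whose mod--$2$ reduction is a cusp form (the Tate curve has height one); the precise input is that $\bar v_2\equiv h$ modulo $2\underline{\underline{D}}^{\Gamma}$, where $h:=\tfrac{E_1^2-1}{4}$, together with $E_1\equiv1$ modulo $2\underline{\underline{D}}^{\Gamma}$ and the identity $E_1^2=1+4h$ in $\underline{\underline{D}}^{\Gamma}$. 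Second, the cochain--level formula of Laures and Behrens--Laures for the composite of the left vertical arrow with the dotted one: it sends the class of $g/(2^{a}v_1^{b})$ to the rational divided congruence, unique modulo $\underline{\underline{D}}^{\Gamma}$, whose $2$--adic and $v_1$--adic completion equals $\bar g/(2^{a}E_1^{b})$. Third, Shimomura's recursion for the $x_n$, namely $x_0=v_2$ and $x_{n+1}=x_n^{\,2}+v_1^{a_n}(\cdots)$ with the $a_n$ as in the introduction, where all further corrections carry either a higher power of $v_1$ or an extra factor of $2$.

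\emph{Reduction.} Feeding \eqref{simple_betas} into this, $f(\beta_{s\cdot2^n/j})$ is the completion of $\bar x_n^{\,s}/(2E_1^{j})$, and two simplifications take hold. On one hand, using $E_1\equiv1$ and $E_1^2=1+4h$ one checks that, for arbitrary exponents, a monomial $E_1^{c}\bar v_2^{\,m}$ divided by $2$ completes to $\tfrac12 h^{m}$ modulo $\underline{\underline{D}}^{\Gamma}$; in particular the denominator $E_1^{j}$ is harmless. On the other hand, reducing $\bar x_n^{\,s}$ modulo $(2,E_1^{j})$ kills every term of the recursion past the leading $v_2^{\,s\cdot2^n}$ --- each such term either carries an extra $2$ or is annihilated by $v_1^{j}$ --- with the sole possible exception, when $n\ge2$ (write $n=l+2$), of the contribution of the displayed correction $v_1^{a_{n-1}}(\cdots)$ to $x_{n-1}^{\,2}$: a monomial $(\text{unit})\,v_1^{\,c}\,v_2^{\,(4s-1)2^{l}}$, which survives precisely when $j$ exceeds the threshold $3\cdot2^{l}$ dictated by Shimomura's conditions. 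Hence for $n=0$ there is no correction at all (part (i), with the hypothesis $s\ge3$ excising the exceptional element $\beta_1$); for $n=1$ the correction already carries a factor of $2$ and drops out for every admissible $j$ (part (ii)); and $n=l+2\ge2$ produces part (iii).

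\emph{Main computation.} For $n=l+2$ I would substitute the leading term $\bar v_2^{\,4s\cdot2^{l}}$, and --- if $j>3\cdot2^{l}$ --- the surviving correction, into the completion formula; the first yields $\tfrac12 h^{4s\cdot2^{l}}$ and the second, using that $s$ is odd so that the coefficient is a $2$--adic unit, yields $\tfrac12 h^{(4s-1)2^{l}}$, while all remaining terms contribute nothing. For $j\le3\cdot2^{l}$ only the first summand survives, which is exactly the simple form in (iii); this is consistent with the fact that for those $j$ the divisibility improves via \eqref{higher_beta} and relations (i)--(ii), so that $\beta_{4s\cdot2^{l}/j}$ is twice a class whose $f$--invariant lies a priori in the $4$--torsion. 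Parts (i) and (ii) are the corresponding, easier, computations for $n=0$ and $n=1$.

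\emph{Main obstacle.} The delicate part is the $2$--adic bookkeeping in the reduction and completion steps: one must pin down exactly which monomials $h^{e}/2^{c}$ stay non--integral after the powers of $E_1$ have been absorbed, control the denominators produced by expanding $E_1^{-j}$ $2$--adically and by replacing $\bar v_2$ with $h$, and --- above all --- match the resulting thresholds on $j$ with Shimomura's numerical side conditions on $(n,s,i,j)$ governing \eqref{simple_betas} and \eqref{higher_beta}. A secondary issue is to show that the answer is independent of the choices made (the lift of $x_n$, the representative of $\bar v_2$ modulo higher powers of $2$); equivalently, that $f$ is compatible with relations (i) and (ii), which is what ultimately underwrites the order--two assertions.
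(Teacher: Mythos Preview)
Your approach is essentially that of the paper: push the $x_n$ into the level-three elliptic theory, reduce $x_n^{\,s}$ modulo $(2,v_1^{\,j})$, and convert the surviving monomials in $v_2$ to powers of $h=(E_1^2-1)/4$ via the basic congruence $E_3-1\equiv h\bmod 2$. One correction, however: the reason the extra term vanishes in part~(ii) is \emph{not} that it carries a factor of $2$. The paper's key device is the identity $v_3=3v_1v_2(v_2+v_1^3)$ in $E^{\Gamma}_*$ (extracted from the Hirzebruch orientation), which makes $x_1=v_2^{\,2}-3v_1^{\,3}(v_2+v_1^3)$ an honest polynomial; the correction thus carries $v_1^{\,3}$ and dies modulo $v_1^{\,j}$ for the admissible $j\le2$. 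More generally, this $v_3$--relation is precisely what renders all the $x_n$ holomorphic and explicitly computable as polynomials in $v_1,v_2$, replacing your abstract appeal to Shimomura's recursion: for $n=l+2$ one obtains directly
\[
x_{2+l}^{\,s}\equiv v_2^{\,4s\cdot2^l}+v_1^{\,3\cdot2^l}v_2^{\,(4s-1)2^l}\pmod{(2,\,v_1^{\,a_{l+2}})},
\]
so the threshold $3\cdot2^l$ on $j$ is read off immediately, with none of the delicate $2$--adic bookkeeping you anticipate in your ``main obstacle''.
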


\begin{thm}\label{higher-order}
The $f$--invariants of the beta elements of higher order are
\begin{itemize}
\item[\textup{(i)}] for odd $s\geq1$:
$$f\left(\beta_{4s/2,2}\right)\equiv\frac{1}{4}\left(\frac{E_1^2-1}{4}\right)^{4s}\mod\underline{\underline{D}}_{12s-2}^{\Gamma}$$

\item[\textup{(ii)}] for $l\geq0$, $i\geq1$, $j=m\cdot2^i\leq a_{l+2}$, odd $s\geq1$,  and mod $\underline{\underline{D}}_{3s\cdot2^{l+i+2}-j}^{\Gamma}$: 
\begin{align*}
f\left(\beta_{s\cdot2^{l+i+2}/j,i+1}\right)&\equiv\frac{1}{2^{i+1}}\left(\frac{E_1^2-1}{4}\right)^{s\cdot2^{l+i+2}}+\frac{1}{2}\left(\frac{E_1^2-1}{4}\right)^{\left(s\cdot2^{i+2}-1\right)2^{l}}\\
&\equiv\frac{1}{2^{i+1}}\left(\frac{E_1^2-1}{4}\right)^{s\cdot2^{l+i+2}}\textrm{{\em\ if }}j\leq3\cdot2^l
\end{align*}

\item[\textup{(iii)}] for $k\geq2$:
$$f\left(\beta_{4k/2,3}\right)\equiv\frac{1+4k}{8}\left(\frac{E_1^2-1}{4}\right)^{4k}\mod\underline{\underline{D}}_{12k-2}^{\Gamma}$$

\item[\textup{(iv)}] for $l\geq0$, $i\geq1$, $j=m\cdot2^i\leq a_{l+2}$,  odd $s\geq1$, and mod $\underline{\underline{D}}_{3s\cdot2^{l+i+3}-j}^{\Gamma}$:
\begin{equation*}
\begin{split}
f\left(\beta_{s\cdot2^{l+i+3}/j,i+2}\right)&\equiv\frac{1}{2^{i+2}}\left(\frac{E_1^2-1}{4}\right)^{s\cdot2^{l+i+3}}+\frac{1}{2}\left(\frac{E_1^2-1}{4}\right)^{\left(s\cdot2^{i+3}-1\right)2^l}\\
&\equiv\frac{1}{2^{i+2}}\left(\frac{E_1^2-1}{4}\right)^{s\cdot2^{l+i+3}}\textrm{{\em\ if }}j\leq3\cdot2^l\\
\end{split}
\end{equation*}
\end{itemize}
\end{thm}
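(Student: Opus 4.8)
The plan is to run, for each of the four families, the explicit elliptic greek letter chase that also proves Theorem \ref{order-two}. Each beta element is first written via \eqref{simple_betas} or \eqref{higher_beta} as $\eta$ of an explicit fraction in $BP_*/(p^{\infty},v_1^{\infty})[v_2^{-1}]$; this fraction is then transported down the left-hand vertical arrow of \eqref{the_diagram} along the map classifying the formal group of the universal curve over $\Gamma_1(3)$, and finally pushed through the dotted arrow into $\underline{\underline{D}}^{\Gamma}_{*+1}\otimes\mathbb{Q/Z}$. Relative to Theorem \ref{order-two} there are exactly two new ingredients: the higher powers of $2$ in the denominators, and, in parts (iii) and (iv), the elements $y_i\in v_1^{-1}BP_*$. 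The preliminary step is therefore to record, to the precision the stated degrees demand, the images of the generators involved. Those of $v_1$ and $v_2$ are as in Theorem \ref{order-two}, so that the $v_2$--part of $x_n^s$ contributes a power of $c:=(E_1^2-1)/4$; the lower-order terms in the defining congruences for $x_n$ produce the $\tfrac12 c^{(\dots)}$ summands, exactly as the correction term in Theorem \ref{order-two}(iii) arose; and $y_i$ must be shown to reduce to $v_1^{2^i}$ modulo terms of strictly smaller $v_1$--order, the discrepancy between $y_i^{m}$ and $v_1^{m2^i}=v_1^{j}$ in the denominator being the source of the coefficient $\tfrac{1+4k}{8}$ in (iii) and of the correction summands in (iv). Here Shimomura's relation (i) already identifies the $y_i$-- and $v_1$--versions whenever $m$ is even, so the genuinely new computation with $y_i$ arises only for $m$ odd.

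Each family is then treated by the same three moves: invert $v_2$ and fix a cocycle representative; run the $v_1$--Bockstein to clear $v_1^{j}$ (respectively $y_i^{m}$) from the denominator, turning the fraction into a divided congruence with leading term the appropriate power of $c$; then run the $2$--Bockstein and read off the class modulo $\underline{\underline{D}}^{\Gamma}$. Part (i) is the base case $n=2$, $j=2$, $i=1$, which differs from Theorem \ref{order-two}(iii) only through the extra factor of $2$ in the denominator; parts (ii) and (iv) are the generic simple-- and higher-order families, differing from the leading-order answer only by the corrections above and collapsing to the bare leading term precisely when $j\le 3\cdot2^{l}$; part (iii) is the residual $y_1$--family with $m=1$, where the correction is neither zero nor a clean power of $c$ and must be followed coefficient by coefficient. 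The relations (i) and (ii) of Shimomura, together with the mutual compatibility of the reductions modulo successive powers of $v_1$, furnish nontrivial consistency checks on the resulting formulas.

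The hard part will be the two Bockstein steps, or rather deciding how far to carry them --- that is, which lower-order terms of $x_n$ and $y_i$ still contribute modulo $\underline{\underline{D}}^{\Gamma}$ in degrees $3s\cdot2^{l+i+2}-j$ and $3s\cdot2^{l+i+3}-j$. This is a bookkeeping problem about the filtration of the ring of divided congruences, and the dichotomy at $j=3\cdot2^{l}$ is precisely the point where the correction term crosses the relevant threshold. Pinning down the exact coefficient $\tfrac{1+4k}{8}$ in (iii), where no such collapse occurs, is the single most delicate computation and the one most directly sensitive to the fine structure of $y_1$.
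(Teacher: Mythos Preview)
Your outline matches the paper's approach: compute $x_n^s$ modulo the appropriate ideal in $M_*^{\Gamma}$, convert powers of $v_2$ to powers of $E_3$ and then to powers of $c=(E_1^2-1)/4$ via Lemmas \ref{E_3_congruences} and \ref{removal_of_second_summand}, and read off the class in $\underline{\underline{D}}^{\Gamma}\otimes\mathbb{Q}/\mathbb{Z}$. The correction terms in (ii) and (iv) do come from the subleading term of $x_n$ exactly as you say (the paper isolates this as Lemma \ref{reduction_of_x_n}), and the collapse at $j\le 3\cdot 2^l$ is Lemma \ref{removal_of_second_summand}.

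There is one technical point you do not address, and it is precisely where parts (iii) and (iv) require a new idea. The elements $y_i\in v_1^{-1}M_*^{\Gamma}$ are \emph{not holomorphic} for $i\ge1$, so one cannot literally form the quotient of $M_*^{\Gamma}$ by $y_i^m$ and run the elliptic beta map \eqref{elliptic_beta_map}. The paper fixes this by replacing $y_i^m$ with the explicit truncation $z_{i,m}=v_1^{m2^i}-m\cdot 2^{i+1}v_1^{m2^i-3}v_2$, which is holomorphic once $j=m2^i\ge4$ and satisfies $z_{i,m}\equiv y_i^m\bmod 2^{i+2}$; this is what makes the ideal $(2^{i+2},z_{i,m})$ regular on $M_*^{\Gamma}$ and the map well defined. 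In part (iii) even $z_{1,1}$ fails to be holomorphic, and the paper argues separately that $z_{1,1}M^{\Gamma}_{12k-2}\subset M^{\Gamma}_{12k}$ for weight reasons, together with the degree bound forcing $k\ge2$. Your phrase ``$y_i$ must be shown to reduce to $v_1^{2^i}$ modulo terms of strictly smaller $v_1$--order'' points in the right direction but does not see this obstacle.

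Finally, the coefficient $(1+4k)/8$ in (iii) is not obtained by tracking anything ``coefficient by coefficient'': once one has $x_2^k\equiv v_2^{4k}+kv_1^3v_2^{4k-1}\bmod(8,v_1^6)$, the single relation $v_1^3=4v_1^{-1}v_2+v_1\,z_{1,1}$ (equivalently $v_1^6\equiv 16v_2^2\bmod z_{1,1}$, so $v_1^3v_2^{4k-1}\equiv 4v_2^{4k}\bmod(8,z_{1,1})$) converts this in one step to $(1+4k)v_2^{4k}$.
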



The proof presented in the following section turns out to be a pretty much straightforward calculation: After a brief recollection of the relevant definitions, we study the image (under the orientation underlying the Hirzebruch genus) of the elements $x_i$ and $y_i$ occurring in the definition of the beta elements. Then, we are going to sketch our approach to the argument given in \cite[section 4]{Behrens:2008fp}, i.e.\ we explain how to carry out the greek letter map on the level of (holomorphic) modular forms. The final step consists of performing this computation  explicitly.

\section{Proof of the Theorems}


\subsection{Preliminaries}

Following \cite{Laures:1999sh}, we consider the congruence subgroup $\Gamma=\Gamma_1(N)$ for a fixed level $N>1$, set $\mathbb{Z}^{\Gamma}=\mathbb{Z}[\zeta_N,1/N]$ and denote by $M^{\Gamma}_*$ the graded ring of modular forms w.r.t.~$\Gamma$ which expand integrally, i.e.~which lie in $\mathbb{Z}^{\Gamma}[\![q]\!]$. The ring of {\em divided congruences} $D^{\Gamma}$  consists of those rational combinations of modular forms which expand integrally; this ring can be filtered by setting
$$D_k^{\Gamma}=\left\{\left.f={\textstyle{\sum_{i=0}^{k}}}f_i\ \right| f_i\in M_i^{\Gamma}\otimes\mathbb{Q},\ f\in\mathbb{Z}^{\Gamma}[\![q]\!]\right\}.$$
Finally, we introduce $$\underline{\underline{D}}^{\Gamma}_{k}=D^{\Gamma}_k+M_0^{\Gamma}\otimes\mathbb{Q}+M_k^{\Gamma}\otimes\mathbb{Q}.$$

Now, if $Ell^{\Gamma}$ denotes the complex oriented elliptic cohomology theory associated to the universal curve over the ring of modular forms w.r.t.\ $\Gamma$, the composite
$$\textrm{E}_2^{2,2k+2}[MU]\rightarrow \textrm{E}_2^{2,2k+2}[Ell^{\Gamma}]\rightarrow\underline{\underline{D}}^{\Gamma}_{k+1}\otimes{\mathbb{Q/Z}}$$
is injective (away from primes dividing the level $N$) \cite{Laures:1999sh}. Henceforth, we fix $p=2$ and $N=3$. Thus we have
$$M^{\Gamma}_*=\mathbb{Z}^{\Gamma}[E_1,E_3],$$
where
\begin{equation*}
\begin{split}
E_1&=1+6\sum_{n=1}^{\infty}\sum_{d|n}(\frac{d}{3})\ q^n,\\
E_3&=1-9\sum_{n=1}^{\infty}\sum_{d|n}(\frac{d}{3})d^2\ q^n\\
\end{split}
\end{equation*}
are the odd Eisenstein series of the indicated weight at the level $N=3$ (and $(\frac{d}{3})$ denotes the Legendre symbol).
Furthermore, the following basic congruence can be read off of the $q$-expansions:
\begin{equation}\label{the_basic_congruence}
 E_3-1\equiv \frac{E_1^2-1}{4}\mod 2D_3^{\Gamma}.
\end{equation}


\subsection{The image under the orientation}\label{computing_the_image}

Recall that the power series associated to the Hirzebruch elliptic genus of level three may be expressed as (see e.g.\ \cite{Bodecker:2008pi})
$$Q\left(x\right)=\exp\left(3\sum_{n=1}^{\infty}\frac{x^{2n}}{(2n)!}G_{2n}^*(\tau)-2\sum_{k=0}^{\infty}\frac{x^{2k+1}}{(2k+1)!}G_{2k+1}^{(-\omega)}(\tau)\right)\in M_*^{\Gamma}\otimes{\mathbb{Q}}[\![x]\!]$$
where $\omega=2\pi i/3$ and
\begin{equation*}
\begin{split}
G_{2n}^*(\tau)&=G_{2n}(\tau)-3^{2n-1}G_{2n}(3\tau),\\
G_{2k+1}^{(-\omega)}(\tau)&=\frac{e^{\omega}-e^{-\omega}}{2}3^{2k}\frac{B_{2k+1}(1/3)}{2k+1}E_{2k+1}^{\Gamma_1(3)}(\tau).\\
\end{split}
\end{equation*}
The first few terms of this power series, when expressed in terms of $E_1$ and $E_3$, i.e.\ the generators of $M_*^{\Gamma}$, read
\begin{equation}\label{expandedgenus}
\begin{split}
Ell^{\Gamma_1(3)}(x) & = 1+\frac{iE_1}{2\sqrt{3}}x+\frac{E_1^2}{12}x^2+\frac{iE_1^3-iE_3}{18\sqrt{3}}x^3+\frac{13E_1^4-16E_1E_3}{2160}x^4\\ &\quad+\frac{iE_1^2(E_1^3-E_3)}{216\sqrt{3}}x^5+\frac{121E_1^6-152E_1^3E_3+40E_3^2}{272160}x^6\\ &\quad+\frac{iE_1}{\sqrt{3}}\frac{7E_1^6-11E_1^3E_3+4E_3^2}{19440}x^7+O(x^8)\\
\end{split}
\end{equation}
The genus of the following complex projective spaces is readily evaluated:
\begin{align*}
w_1&=\phi({\mathbb{CP}}^1)=\frac{i}{\sqrt3}E_1,\\
w_3&=\phi({\mathbb{CP}}^3)=\frac{i}{\sqrt3}\frac{5E_1^3-2E_3}{9},\\
w_7&=\phi({\mathbb{CP}}^7)=\frac{i}{\sqrt3}\frac{70E_1^4E_3-14E_1E_3^2-65E_1^7}{243}.
\end{align*}
As is well-known, underlying this genus is the complex orientation of the cohomology theory $Ell^{\Gamma}$, i.e.\ 
$$\phi:MU\rightarrow Ell^{\Gamma}$$
and we can compute the images of the Hazewinkel generators \cite[Appendix A2]{Ravenel:2004xh} at the prime $p=2$, which we still denote by $v_i$:
$$v_1=w_1=\frac{i}{\sqrt3}E_1$$
$$v_2=\frac{w_3-w_1^3}{2}=\frac{i}{\sqrt{3}}\frac{4E_1^3-E_3}{9},$$
$$v_3=\frac{w_7}{4}-\frac{w_1^7+w_1w_3^2}{8}=\frac{iE_1}{\sqrt{3}}\frac{5E_1^3E_3-E_3^2-4E_1^6}{81}.$$
In particular, we see that $v_3$ is decomposable:
\begin{equation}\label{decomp_v_3}
\begin{split}
v_3&=\frac{iE_1}{\sqrt{3}}\left(\frac{4E_1^3E_3-E_3^2}{81}-\frac{4E_1^6-E_1^3E_3}{81}\right)\\
&=\frac{iE_1}{\sqrt{3}}\left(\frac{i}{\sqrt{3}}\frac{4E_1^3-E_3}{9}\right)\left(-\frac{i}{3\sqrt{3}}\left(E_3-E_1^3\right)\right)\\
&=3v_1v_2\left(v_2+v_1^3\right)\\
\end{split}
\end{equation}
Plugging \eqref{decomp_v_3} into the definitions of the $x_i$ (considered in $v_2^{-1}M_*^{\Gamma}$) yields
\begin{equation}\label{the_x_i}
\begin{split}
x_0&=v_2\\
x_1&=v_2^2-v_1^2v_2^{-1}v_3=v_2^2-3v_1^3\left(v_2+v_1^3\right)\\
x_2&=x_1^2-v_1^3v_2^3-v_1^5v_3=v_2^4-7v_1^3v_2^3+15v_1^9v_2+9v_1^{12}\\
x_i&=x_{i-1}^2\quad i\geq3,\\
\end{split}
\end{equation}
showing that the $x_i$ are actually holomorphic. On the other hand, unless $i=0$, this is not  true for the $y_i\in v_1^{-1}M_*^{\Gamma}$, which read:
\begin{equation}\label{the_y_i}
\begin{split}
y_0&=v_1\\
y_1&=v_1^2-4v_1^{-1}v_2\\
y_i&=y_{i-1}^2\quad i\geq2.\\
\end{split}
\end{equation}
However, for $i\geq1$ and $m\geq1$, we may introduce 
\begin{equation}\label{the_z_i}
\begin{split}
z_{i,m}&=v_1^{m\cdot2^i}-m\cdot2^{i+1}v_1^{m\cdot2^i-3}v_2,\\
\end{split}
\end{equation}
which are holomorphic for $m\cdot2^i\geq4$ and satisfy
\begin{align*}
z_{i,m}&\equiv y_i^m  \mod2^{i+2}v_1^{-1}M_*^{\Gamma}\\
 &\equiv 1 \ \  \mod2^{i+2}\mathbb{Z}^{\Gamma}[\![q]\!],
\end{align*}
the second line being an immediate consequence of \eqref{the_basic_congruence}.


\subsection{Determining `elliptic' beta elements}

Requiring $p>3$ and working with the full modular group, Behrens and Laures have shown in \cite[section 4]{Behrens:2008fp} how an element in $H^0\left(M_*/\left(p^{\infty},E_{p-1}^{\infty}\right)\right)$ gives rise to an element in $D\otimes{\mathbb{Q}}/D[\frac{1}{6}]+M_k\otimes\mathbb{Q}+\mathbb{Q}$; clearly, the other primes can be treated analogously by working with a smaller congruence subgroup. Let us rephrase their argument in a language closer to the original formulation of the greek letter construction:

Still working at the prime $p=2$ and the level $N=3$, we choose a (holomorphic) modular form $\mu \in M^{\Gamma}_{|\mu|}$ and a pair of positive integers $\left(i_0, i_1\right)$ such that
 \begin{equation}
 \mu^{i_1}\equiv1\mod 2^{i_0}D_{i_1|\mu|}^{\Gamma};
 \end{equation}
in particular, this ensures that  $\left(2^{i_0},\mu^{i_1}\right)$ is regular on $M_*^{\Gamma}$.
 
 Now, given a modular form $\tilde\varphi_t\in M_t^{\Gamma}$, we can use the natural inclusion
 $$M_t^{\Gamma}\hookrightarrow D_t^{\Gamma}$$
and ask whether $\tilde\varphi_t$ satisfies
\begin{equation}\label{invariant_mod}
\tilde\varphi_t\equiv\mu^{i_1}\varphi_{t/i_1|\mu|,i_0} \mod 2^{i_0}D^{\Gamma}_t
\end{equation}
for some 
$$\varphi_{t/i_1|\mu|,i_0}\in D^{\Gamma}_{t-i_1|\mu|}/2^{i_0}D^{\Gamma}_{t-i_1|\mu|}$$
Let us call a modular form satisfying \eqref{invariant_mod} {\em invariant mod} $\left(2^{i_0},\mu^{i_1}\right)$. Moreover, we have the obvious composition
$$\underline{\underline{\left(\ \cdot\ \right)}}\ \colon D_k^{\Gamma}/2^{i_0}D^{\Gamma}_k\cong{D^{\Gamma}_k}\otimes \mathbb{Z}/2^{i_0} \rightarrow D^{\Gamma}_k\otimes \mathbb{Q/Z}  \rightarrow \underline{\underline{D}}_k^{\Gamma}\otimes \mathbb{Q/Z},$$ $$\quad\varphi_k\mapsto\underline{\underline{\varphi}}_k$$
Then it is easy to see that, for an invariant modular form $\tilde\varphi_t$, the assignment
$$\tilde\varphi_t\mapsto\underline{\underline{\varphi}}_{t/i_1|\mu|,i_0}$$
depends only on the reduction $\varphi_t\equiv\tilde\varphi_t\mod\left(2^{i_0},\mu^{i_1}\right)$, hence descends to a well-defined map
\begin{equation}\label{elliptic_beta_map}
\ker\left(M_t^{\Gamma}/\left(2^{i_0},\mu^{i_1}\right)\rightarrow D_t^{\Gamma}/\left(2^{i_0},\mu^{i_1}\right)\right)\longrightarrow \underline{\underline{D}}_{t-i_1|\mu|}^{\Gamma}\otimes \mathbb{Q/Z}
\end{equation}
which we may think of as the {\em `elliptic' greek letter beta map} and which corresponds to the dotted arrow in \eqref{the_diagram}.

\begin{rmk} By removing the constant term of the $q$-expansion, we obtain another map 
$$d\colon M_t^{\Gamma}\rightarrow D^{\Gamma}_t,\quad d\left(\tilde\varphi_t\right)=\tilde\varphi-q^0\left(\tilde\varphi_t\right)$$
that might look like a more natural choice w.r.t.\ which invariance should be defined (cf.\ \cite{Behrens:2008fp}). However, we have $q^0(\varphi)\equiv \mu^{i_1}q^0(\varphi)\mod 2^{i_0}D^{\Gamma}_t$, hence both choices are equivalent (up to the shift of $\varphi_{t/i_1|\mu|,i_0}$ by the constant $q^0\left(\tilde\varphi_t\right)$, which maps to zero in $\underline{\underline{D}}^{\Gamma}_{k}\otimes\mathbb{Q/Z}$).
\end{rmk}


\subsection{Explicit computations}


In this subsection, we are going to compute the effect of the map \eqref{elliptic_beta_map} on the preimage of Shimomura's beta elements; the ones defined by \eqref{simple_betas} are dealt with easily, since   $\left(2^{i+1},v_1^{j}\right)$ is regular on $M^{\Gamma}_*$ provided that $j=m\cdot2^i$; moreover, for $k\geq0$ this implies:
\begin{equation}\label{multi_by_v_1^j}
\left(\frac{E_1^2-1}{4}\right)^k\equiv v_1^j\left(\frac{E_1^2-1}{4}\right)^k\mod2^{i+1}D^{\Gamma}_{2k+j}
\end{equation}
Furthermore, the following two results are useful:
\begin{lem}\label{E_3_congruences}
For $i\geq0$, $l\geq0$, $m\cdot2^i=j\leq6\cdot2^l$ we have:
\begin{equation*}
\begin{split}
E_3^{s\cdot2^{l+i+2}}&\equiv\left(\frac{E_1^2-1}{4}\right)^{s\cdot2^{l+i+2}}\mod2^{i+1}D^{\Gamma}_{12s\cdot2^{l+i}}+v_1^j\cdot M_{12s\cdot2^{l+i}-j}^{\Gamma}\\
\end{split}
\end{equation*}
\end{lem}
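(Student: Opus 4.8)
The plan is to reduce the claimed congruence to repeated application of the basic congruence \eqref{the_basic_congruence} together with the regularity of $(2^{i+1},v_1^j)$ on $M_*^\Gamma$. First I would recall from \eqref{the_basic_congruence} that $E_3 \equiv 1 + \tfrac{E_1^2-1}{4} \pmod{2D_3^\Gamma}$, so that $E_3 - \bigl(\tfrac{E_1^2-1}{4}\bigr) \equiv 1 \pmod{2D_3^\Gamma}$; raising this to a $2$-power exponent is the natural tool, since on the nose $(a+b)^{2^r} \equiv a^{2^r} + b^{2^r}$ modulo $2$ is far too weak, but modulo higher powers of $2$ one gains divisibility from the binomial coefficients. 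Concretely, writing $E_3 = \bigl(\tfrac{E_1^2-1}{4}\bigr) + \delta$ with $\delta \equiv 1 \pmod{2D_3^\Gamma}$ and taking $2^{l+i+2}s$-th powers, the cross terms in the binomial expansion carry factors $\binom{2^{l+i+2}s}{r}$; the $2$-adic valuation of these coefficients together with the bookkeeping on weights is what produces the modulus $2^{i+1}D_{12s\cdot2^{l+i}}^\Gamma$ on the one hand and the ideal $v_1^j M_{12s\cdot2^{l+i}-j}^\Gamma$ on the other.

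The key steps, in order, would be: (1) rewrite $E_3^{s\cdot2^{l+i+2}} - \bigl(\tfrac{E_1^2-1}{4}\bigr)^{s\cdot2^{l+i+2}}$ as a sum over intermediate binomial terms, peeling off one factor at a time via the telescoping identity $A^{2n} - B^{2n} = (A^n - B^n)(A^n + B^n)$ applied $l+i+2$ times; (2) track that each factor $A^{2^k s} + B^{2^k s}$ is congruent to $2$ times a unit modulo $D_*^\Gamma$ (using that both $E_3$ and $\tfrac{E_1^2-1}{4}$ reduce appropriately, and that their sum inherits the relevant congruence), contributing one power of $2$ per factor; (3) observe that after extracting $2^{l+i+2}$ such powers, one has more than enough to reach $2^{i+1}$, and the surplus $2^{l+1}$ worth of factors, together with the fact that $\tfrac{E_1^2-1}{4}$ is divisible by $v_1^2$ times a unit in $v_1^{-1}M_*^\Gamma$ (since $v_1 = \tfrac{i}{\sqrt3}E_1$ gives $E_1^2 = -3v_1^2$, whence $\tfrac{E_1^2-1}{4} = -\tfrac{3v_1^2+1}{4}$, and iterating the squaring introduces $v_1^{2^{k}}$ factors), yields divisibility by $v_1^j$ for $j = m\cdot2^i \le 6\cdot2^l$; (4) assemble the two divisibilities into the stated modulus $2^{i+1}D^\Gamma_{12s\cdot2^{l+i}} + v_1^j M^\Gamma_{12s\cdot2^{l+i}-j}$, being careful that the terms which are divisible by $v_1^j$ but perhaps not by $2^{i+1}$ land in the second summand while the rest land in the first.

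The main obstacle I anticipate is step (3): it is not literally true that $\tfrac{E_1^2-1}{4}$ is divisible by $v_1^j$ for all $j$ in range — one only gets divisibility by a bounded power of $v_1$ from a single factor, so one genuinely needs to exploit the repeated squaring structure, i.e.\ that $E_3^{2^k}$ and $\bigl(\tfrac{E_1^2-1}{4}\bigr)^{2^k}$ each approach $1$ modulo $2^{k+1}\mathbb{Z}^\Gamma[\![q]\!]$ (as already noted for the $z_{i,m}$ in \eqref{the_z_i}), so that their difference is simultaneously highly $2$-divisible and, after clearing denominators, highly $v_1$-divisible once one passes to $v_1^{-1}M_*^\Gamma$ and back. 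Making the interplay between the $2$-adic filtration and the $v_1$-adic (equivalently $E_1$-adic, equivalently cusp-) filtration precise — so that the surplus powers of $2$ beyond $2^{i+1}$ can be traded for powers of $v_1$ up to $v_1^{6\cdot2^l}$ — is the delicate part; everything else is routine binomial-coefficient valuation and weight bookkeeping. I would handle this by inducting on $l$, the base case $l=0$ (so $j \le 6$, i.e.\ $j \in \{m\cdot2^i : m\cdot2^i \le 6\}$) following directly from \eqref{the_basic_congruence} and the regularity of $(2^{i+1},v_1^j)$ via \eqref{multi_by_v_1^j}, and the inductive step squaring the congruence and absorbing the extra factor of $2$ and the extra $v_1$-power simultaneously.
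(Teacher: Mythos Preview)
Your step (2) fails outright. With $A=E_3$ and $B=\frac{E_1^2-1}{4}$, the $q$-expansions have constant terms $q^0(A)=1$ and $q^0(B)=0$, so $q^0\bigl(A^{s\cdot2^k}+B^{s\cdot2^k}\bigr)=1$ for every $k$; none of these factors is divisible by $2$ in $D^\Gamma$. The same observation kills the whole strategy: $q^0\bigl(E_3^{N}-B^{N}\bigr)=1$ for any $N\ge1$, so the difference never lies in $2D^\Gamma$, let alone $2^{i+1}D^\Gamma$. Thus the summand $v_1^{j}M^\Gamma$ in the modulus is not a convenience but is essential --- it is the only place the constant term $1$ can go (indeed $1=v_1^{j}\cdot v_1^{-j}\cdot1$ is not what is meant: $v_1^{j}M^\Gamma$ lives inside $M^\Gamma$, where $v_1$ is \emph{not} a unit, whereas $v_1$ \emph{is} a unit in $D^\Gamma$). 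Your proposed ``trading surplus powers of $2$ for powers of $v_1$'' in step (3) therefore has no mechanism behind it; the remark that $\frac{E_1^2-1}{4}=-\frac{3v_1^2+1}{4}$ shows precisely that $B$ is \emph{not} $v_1$-divisible in any useful sense.

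What is missing is the idea the paper actually uses: rather than comparing $E_3$ directly to $B$, first compare $E_3$ to $E_3-v_1^3$ \emph{inside} $M_*^\Gamma$. Expanding $E_3^{2^{l+i+2}}=\bigl((E_3-v_1^3)+v_1^3\bigr)^{2^{l+i+2}}$ binomially, the cross terms carry explicit positive powers of $v_1^3$ together with the $2$-divisibility of the binomial coefficients; one checks that modulo $(2^{i+2},v_1^{12\cdot2^l})$ only the single correction $2^{i+1}(v_1^6E_3^2)^{2^l}E_3^{2^{l+2}(2^i-1)}$ survives, and this visibly lies in $2^{i+1}D^\Gamma\cap v_1^{j}M^\Gamma$. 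Only \emph{after} this purely $M_*^\Gamma$-manipulation does one invoke \eqref{the_basic_congruence}, which gives $(E_3-v_1^3)^{2^k}\equiv B^{2^k}\bmod 2^{k+1}D^\Gamma_{3\cdot2^k}$ for $k=l+i+2$; here the constant-term obstruction has disappeared because $E_3-v_1^3$ and $B$ now differ by something in $2D^\Gamma$ rather than by $1+2D^\Gamma$. Your inductive sketch at the end is on the right track only once you make this substitution.
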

\begin{proof}
It is easy to see that for $l\geq0$ and $i\geq0$, we have
\begin{equation}\label{dunno}
E_3^{2^{l+i+2}}\equiv\left(E_3-v_1^3\right)^{2^{l+i+2}}+2^{i+1}\left(v_1^6E_3^2\right)^{2^l}E_3^{2^{l+2}\left(2^i-1\right)}\mod\left(2^{i+2},v_1^{12\cdot2^l}\right),
\end{equation}
and the basic congruence \eqref{the_basic_congruence} implies
\begin{equation}
\left(E_3-v_1^3\right)^{2^k}\equiv\left(\frac{E_1^2-1}{4}\right)^{2^k}\mod2^{k+1}D^{\Gamma}_{3\cdot2^k}\qedhere
\end{equation}
\end{proof}

\begin{lem}\label{removal_of_second_summand}
 For $i\geq0$, $l\geq0$, $1\leq j\leq6\cdot2^l$ we have:
\begin{align*}
E_3^{\left(s\cdot2^{i+2}-1\right)2^l}&\equiv\left(\frac{E_1^2-1}{4}\right)^{\left(s\cdot2^{i+2}-1\right)2^l}& &\mod2D^{\Gamma}_{12s\cdot2^{l+i}}+v_1^j\cdot M^{\Gamma}_{12s\cdot2^{l+i}-j}\\
&\equiv0& &\textrm{\quad{\em if }}j\leq3\cdot2^l
\end{align*}
\end{lem}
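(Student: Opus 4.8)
The plan is to mimic the proof of Lemma~\ref{E_3_congruences}, since the exponent $(s\cdot2^{i+2}-1)2^l$ differs from $s\cdot2^{l+i+2}$ only by the subtraction of $2^l$; the point is that this "$-2^l$" in the exponent is precisely what forces the second summand to drop out. First I would write $(s\cdot2^{i+2}-1)2^l = s\cdot2^{l+i+2} - 2^l$ and use the binomial-type expansion analogous to \eqref{dunno}: raising $E_3 = (E_3 - v_1^3) + v_1^3$ to this power modulo $(2,v_1^{?})$, the leading term is $(E_3-v_1^3)^{(s\cdot2^{i+2}-1)2^l}$, which by the basic congruence \eqref{the_basic_congruence} (and the $2$-adic valuation statement displayed at the end of Lemma~\ref{E_3_congruences}'s proof) is congruent to $\left(\tfrac{E_1^2-1}{4}\right)^{(s\cdot2^{i+2}-1)2^l}$ modulo $2D_{12s\cdot2^{l+i}}^{\Gamma}$. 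That gives the first line. One has to check that the error terms in the expansion are divisible by a sufficiently high power of $v_1$ — namely at least $v_1^{6\cdot2^l}\geq v_1^{j}$ — so that they are absorbed into $v_1^j\cdot M^{\Gamma}_{12s\cdot2^{l+i}-j}$; here the exponent $(s\cdot2^{i+2}-1)2^l$ being at least $3\cdot2^l$ when $s\geq1,i\geq0$ is what makes room for this.

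For the second, sharper statement (vanishing when $j\leq3\cdot2^l$), the idea is that when $j$ is small we can afford to throw away \emph{everything}: we want to show $E_3^{(s\cdot2^{i+2}-1)2^l}$ itself, not just the difference from the $(E_1^2-1)/4$ power, lies in $2D^{\Gamma} + v_1^j M^{\Gamma}$. Since $E_3 \equiv v_1^3 \pmod{\,\text{something}}$ — more precisely $E_3 - v_1^3 = E_3 - 1 - (v_1^3 - 1) \equiv \tfrac{E_1^2-1}{4} - (v_1^3-1)$, and modulo $2D_3^{\Gamma}$ plus the ideal generated by $v_1$ this is controlled — one should argue that $E_3^{(s\cdot2^{i+2}-1)2^l} \equiv v_1^{3(s\cdot2^{i+2}-1)2^l} \pmod{2D^{\Gamma}}$ up to lower-$v_1$-order corrections, and then observe $3(s\cdot2^{i+2}-1)2^l \geq 3\cdot2^l \geq j$, so the whole thing is divisible by $v_1^j$ modulo $2$ and hence lies in $2D^{\Gamma}_{12s\cdot2^{l+i}} + v_1^j M^{\Gamma}_{12s\cdot2^{l+i}-j}$. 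Concretely I would induct on the $2$-adic expansion of the exponent, peeling off one power of two at a time, at each stage using $(E_3-v_1^3)^{2^k}\equiv \left(\tfrac{E_1^2-1}{4}\right)^{2^k}\pmod{2^{k+1}D^{\Gamma}_{3\cdot2^k}}$ together with the fact that cross terms pick up factors of $v_1$.

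The main obstacle I expect is bookkeeping the two filtrations simultaneously: the $2$-adic filtration (we only need things mod $2$, not mod a higher power, which is why the first line has modulus $2D^\Gamma$ rather than $2^{i+1}D^\Gamma$ as in Lemma~\ref{E_3_congruences}) and the $v_1$-adic/weight filtration (needing error terms to carry at least $v_1^j$, which costs us the extra $2$-adic precision). Getting the exponent arithmetic right — verifying that $(s\cdot2^{i+2}-1)2^l$ really is large enough in every relevant case, and that subtracting $2^l$ from the exponent is exactly the shift that kills the "$2^{i+1}(v_1^6 E_3^2)^{2^l}E_3^{2^{l+2}(2^i-1)}$"-type secondary term from \eqref{dunno} — is the delicate part, but it is routine modular-forms/$q$-expansion manipulation once the structure above is in place, so I would not grind through it in detail here.
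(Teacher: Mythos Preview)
The paper's proof is a single sentence: ``This immediately follows from \eqref{the_basic_congruence}.'' Your proposal reaches for far heavier machinery than is needed. Because the modulus here is only $2D^{\Gamma}$ (not $2^{i+1}D^{\Gamma}$ as in Lemma~\ref{E_3_congruences}), there is no call for an analogue of \eqref{dunno} or for any induction on the $2$-adic expansion of the exponent. Mod~$2$, the basic congruence together with $E_1\equiv 1\bmod 2D^{\Gamma}_1$ gives at once $E_3-E_1^3\equiv \tfrac{E_1^2-1}{4}\bmod 2D^{\Gamma}_3$; the only further observation is that $E_3^N$ (with $N=(s\cdot2^{i+2}-1)2^l$) has weight $3N=w-3\cdot2^l$ where $w=12s\cdot2^{l+i}$, so one may multiply by $E_1^{3\cdot2^l}\equiv1\bmod 2D^{\Gamma}$ to pad to weight $w$, gaining a factor of $v_1^{3\cdot2^l}$ for free. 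That instantly yields the second line for $j\le3\cdot2^l$, and since $E_3^N-(E_3-E_1^3)^N$ is already divisible by $E_1^{3\cdot2^l}$ mod~$2$ (as $2^l\mid N$), one more padding step gives the first line up to $j\le6\cdot2^l$.

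One point in your outline is not quite right as stated: you assert the binomial error terms carry ``at least $v_1^{6\cdot2^l}$,'' but the leading surviving cross term (from $k=2^l$ in the expansion) carries only $E_1^{3\cdot2^l}$. The remaining factor of $v_1^{3\cdot2^l}$ comes from the weight-padding step above, which your sketch never invokes; without it the range $3\cdot2^l<j\le6\cdot2^l$ in the first line is not covered. This is easily repaired, but it underscores that mimicking Lemma~\ref{E_3_congruences} is the wrong template here---working mod~$2$, the argument really is immediate from \eqref{the_basic_congruence}.
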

\begin{proof} This immediately follows from \eqref{the_basic_congruence}\end{proof}


\noindent Now let us treat the beta elements of order two, i.e.\  those with $i=0$ in \eqref{simple_betas}:

\begin{proof}[\bf{Proof of Theorem \ref{order-two}:}]\ \\
For part (i), we observe that:
\begin{align*}
x_0^s&=v_2^s\\
&\equiv E_3^s& &\mod 2D^{\Gamma}_{3s}\\
&\equiv \left(E_3-E_1^3\right)^s & &\mod2D^{\Gamma}_{3s}+v_1\cdot M^{\Gamma}_{3s-1}\\
&\equiv \left(\frac{E_1^2-1}{4}\right)^s & &\mod2D^{\Gamma}_{3s}+v_1\cdot M^{\Gamma}_{3s-1}
\end{align*}
Similarly, for part (ii) we have:
\begin{align*}
x_1^s&\equiv v_2^s & &\mod v_1^j\\
&\equiv E_3^{2s}& &\mod 2D^{\Gamma}_{6s}+v_1^j\cdot M^{\Gamma}_{6s-j}\\
&\equiv \left(E_3-E_1^3\right)^{2s}& &\mod2D^{\Gamma}_{6s}+v_1^j\cdot M^{\Gamma}_{6s-j}\\
&\equiv \left(\frac{E_1^2-1}{4}\right)^{2s}& &\mod2D^{\Gamma}_{6s}+v_1^j\cdot M^{\Gamma}_{6s-j}
\end{align*}
and since $j\leq a_{l+2}=6\cdot2^l$ (and $j\leq 2^{l+2}$ if $s=1$), for part (iii) we conclude: 
\begin{align*}
x_{2+l}^s&\equiv v_2^{4s\cdot2^l}+v_1^{3\cdot2^l}v_2^{\left(4s-1\right)2^l}& &\mod\left(2,v_1^{a_{l+2}}\right)\\
&\equiv E_3^{4s\cdot2^l}+E_3^{\left(4s-1\right)2^l}& &\mod2D^{\Gamma}_{12s\cdot2^l}+v_1^j\cdot M^{\Gamma}_{12s\cdot2^l-j}\\
&\equiv \left(\frac{E_1^2-1}{4}\right)^{4s\cdot2^l}+\left(\frac{E_1^2-1}{4}\right)^{\left(4s-1\right)2^l}& &\mod2D^{\Gamma}_{12s\cdot2^l}+v_1^j\cdot M^{\Gamma}_{12s\cdot2^l-j}
\end{align*}
In view of \eqref{multi_by_v_1^j}, this completes the proof.
\end{proof}

\begin{rmk}
Since $x_0=v_2$ is sent to zero under the map \eqref{elliptic_beta_map} w.r.t.\ $\left(2,v_1\right)$, we see that in order to obtain something interesting, we have to impose $s\geq3$ in part (i). In a similar vein, the condition $j\leq 2^{l+2}$ if $s=1$ in part (iii) is needed to ensure that  $D^{\Gamma}_{8s\cdot2^l+j}\subset D^{\Gamma}_{12s\cdot2^l}$ when using \eqref{multi_by_v_1^j}.
\end{rmk}


Next, we turn our attention to the elements  $\beta_{4s\cdot2^l/j,i+1}$ for $i\geq1$:

\begin{lem}\label{reduction_of_x_n}
For $l\geq0$ and $i\geq0$, we have 
\begin{equation*}
x_{l+i+3}\equiv v_2^{2^{l+i+3}}+2^{i+1}v_1^{3\cdot2^{l}}v_2^{\left(2^{i+3}-1\right)2^{l}}\mod\left(2^{i+2},v_1^{a_{l+2}}\right)
\end{equation*}
\end{lem}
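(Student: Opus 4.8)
The plan is to induct on $i$, starting from the base case $i=0$, which is essentially already visible in the formula for $x_{l+3}=x_{l+2}^2$ combined with the computation \eqref{the_x_i}. Indeed, from $x_{2+l}\equiv v_2^{4\cdot2^l}+v_1^{3\cdot2^l}v_2^{3\cdot2^l}\bmod(2,v_1^{a_{l+2}})$ one squares: the cross term picks up a factor of $2$, the square of the second summand is $v_1^{6\cdot2^l}v_2^{6\cdot2^l}$, which (for suitable ranges, using that $6\cdot2^l\geq a_{l+2}$ when $l\geq 1$ and handling $l=0$ by hand) lies in the ideal $(2,v_1^{a_{l+2}})$; and the congruence must be lifted from mod $2$ to mod $4$. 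So I would first record a mod-$(2^{i+2},v_1^{a_{l+2}})$ refinement of the formulas in \eqref{the_x_i} for $x_{2+l}$ and $x_{l+3}$ precise enough to start the induction, being careful that squaring a mod-$2^{k}$ congruence between holomorphic forms yields a mod-$2^{k+1}$ congruence (this is the same mechanism as in \eqref{dunno} and the displayed congruence $(E_3-v_1^3)^{2^k}\equiv(\ldots)^{2^k}\bmod 2^{k+1}D^\Gamma_{3\cdot2^k}$ in the proof of Lemma \ref{E_3_congruences}).

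For the inductive step, suppose the claimed formula holds for some $i\geq0$, i.e.
\begin{equation*}
x_{l+i+3}\equiv v_2^{2^{l+i+3}}+2^{i+1}v_1^{3\cdot2^{l}}v_2^{(2^{i+3}-1)2^{l}}\mod(2^{i+2},v_1^{a_{l+2}}).
\end{equation*}
Since $x_{l+i+4}=x_{l+i+3}^2$ by \eqref{the_x_i}, I would square the right-hand side. Write it as $A+B$ with $A=v_2^{2^{l+i+3}}$ and $B=2^{i+1}v_1^{3\cdot2^l}v_2^{(2^{i+3}-1)2^l}$. Then $x_{l+i+4}\equiv A^2+2AB+B^2$; here $A^2=v_2^{2^{l+i+4}}$ is the leading term, $2AB=2^{i+2}v_1^{3\cdot2^l}v_2^{(2^{i+4}-1)2^l}$ is exactly the predicted second summand (note $2^{l+i+3}+(2^{i+3}-1)2^l=(2^{i+4}-1)2^l$), and $B^2=2^{2i+2}v_1^{6\cdot2^l}v_2^{(2^{i+4}-2)2^l}$ has a factor $v_1^{6\cdot2^l}$ with $6\cdot2^l\geq a_{l+2}=6\cdot2^l$, so $B^2\equiv0\bmod v_1^{a_{l+2}}$. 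The one subtlety is that squaring the original mod-$(2^{i+2},v_1^{a_{l+2}})$ congruence only gives $x_{l+i+4}\equiv (A+B)^2$ modulo $(2^{i+3},v_1^{a_{l+2}})$ — not the naive mod-$2^{i+2}$ — precisely because if $x_{l+i+3}=A+B+2^{i+2}u+v_1^{a_{l+2}}w$ with $u,w$ holomorphic, then $x_{l+i+3}^2-(A+B)^2$ contains the term $2^{i+3}(A+B)u$, which is divisible by $2^{i+3}$, together with terms divisible by $v_1^{a_{l+2}}$. This is the gain in $2$-divisibility that makes the induction close at the right modulus.

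The main obstacle I anticipate is bookkeeping the ideal $(2^{i+2},v_1^{a_{l+2}})$ correctly through the squaring — in particular verifying that every unwanted term ($B^2$, and the cross-terms coming from the error terms in the inductive hypothesis) genuinely lands in this ideal, and that the modulus improves from $2^{i+2}$ to $2^{i+3}$ rather than staying put. A secondary point is the base case $l=0$, $i=0$, where $a_2=6$ and one must check that $v_1^6 v_2^6$ (the square of the second summand of $x_2$) is divisible by $v_1^6=v_1^{a_2}$ — which it visibly is — and more generally that the holomorphy statements from Section \ref{computing_the_image} are available so that "$\bmod\,2^k D^\Gamma$" and "$\bmod\,2^k$ on $M^\Gamma_*$" can be used interchangeably on these $v_i$-monomials. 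Once the ideal-theoretic bookkeeping is pinned down, the computation is routine.
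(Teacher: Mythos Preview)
Your approach is correct and essentially the same as the paper's: the paper also establishes the $i=0$ case first (computing $x_{l+3}=x_2^{2^{l+1}}$ via the congruence $(a+b)^{2^{l+1}}\equiv a^{2^{l+1}}+b^{2^{l+1}}+2(ab)^{2^{l}}\bmod 4$, which is just your iterated squaring packaged as a single identity) and then passes to general $i$ ``by the binomial theorem,'' which is exactly your inductive squaring carried out in one step. Your bookkeeping worries are all warranted but all resolve as you anticipate---in particular $a_{l+2}=6\cdot 2^{l}$ for every $l\geq 0$, so the $B^{2}$ term always lies in $(v_1^{a_{l+2}})$ and no separate treatment of $l=0$ is needed.
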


\begin{proof} Since $(a+b)^{2^{l+1}}\equiv a^{2^{l+1}}+b^{2^{l+1}}+2(ab)^{2^{l}}\mod4$ for $l\geq0$, we compute
\begin{equation*}
x_{l+3}=x_2^{2^{l+1}}\equiv v_2^{8\cdot2^l}+2\left(v_1^{3}v_2\right)^{2^{l}}v_2^{6\cdot2^l}\mod\left(4,v_1^{a_{l+2}}\right)
\end{equation*}
and use the binomial theorem.
\end{proof}


\begin{proof}[{\bf Proof of Theorem \ref{higher-order}, part (i):}]\ \\
The choice $n=2$ and $i=1$ in \eqref{simple_betas} dictates $j=2$, hence we compute
\begin{align*}
x_2^s&\equiv v_2^{4s}& &\mod \left(4,v_1^2\right)\\
&\equiv E_3^{4s} & &\mod 4D^{\Gamma}_{12s}+v_1^2\cdot M_{12s-2}^{\Gamma}\\
&\equiv \left(\frac{E_1^2-1}{4}\right)^{4s}& &\mod 4D^{\Gamma}_{12s}+v_1^2\cdot M_{12s-2}^{\Gamma}
\end{align*}
Combined with \eqref{multi_by_v_1^j}, this yields the claim.
\end{proof}


\begin{proof}[{\bf Proof of Theorem \ref{higher-order}, part (ii):}]\ \\
In order to treat the remaining cases of our computation of $x_n^s$ mod $\left(2^{i+1},v_1^j\right)$, we notice that since \eqref{simple_betas} requires $j=m\cdot2^i\leq a_{n-i}$,  and since all cases with $i=0$ and the case $i=1$ for $n=2$ have already been taken care of, it suffices to consider $n=l+i+2$ where $l\geq0$ and $i\geq1$; now,  for odd $s\geq1$ we have (by Lemma \ref{reduction_of_x_n} in a reindexed form)
\begin{align*}
x_{l+i+2}^s&\equiv v_2^{s\cdot2^{l+i+2}}+2^iv_1^{3\cdot2^l}v_2^{s\cdot2^{l+i+2}-2^l}& &\mod\left(2^{i+1},v_1^{a_{l+2}}\right)\\
&\equiv E_3^{s\cdot2^{l+i+2}}+2^iE_3^{s\cdot2^{l+i+2}-2^l}& &\mod2^{i+1}D^{\Gamma}_{12s\cdot2^{l+i}}+v_1^j\cdot M_{12s\cdot2^{l+i}-j}^{\Gamma}
\end{align*}
from which the desired result follows.
\end{proof}


Finally, we treat the beta elements defined by \eqref{higher_beta}:

\begin{proof}[{\bf Proof of Theorem \ref{higher-order}, part (iii):}]\ \\
In order to compute the $f$--invariant of $\beta_{4k/2,3}$, we are going to show that, although $z_{1,1}=v_1^2-4v_1^{-1}v_2$ is  not holomorphic, we can still make sense out of the map \eqref{elliptic_beta_map} w.r.t.\ $\left(8, z_{1,1}\right)$ if $t=12k\geq24$. To this end, we observe  $$v_1^6=z_{1,1}v_1^4+4v_1^3v_2=z_{1,1}\left(v_1^4+4v_1v_2\right)+16v_2^2,$$
hence we compute
\begin{align*}
x_2^k&\equiv v_2^{4k}+kv_1^3v_2^{4k-1}& &\mod\left(8,v_1^6\right)\\
&\equiv \left(1+4k\right)v_2^{4k}& &\mod\left(8,z_{1,1}\right)\\
&\equiv \left(1+4k\right)E_3^{4k}& &\mod 8D^{\Gamma}_{12k}+z_{1,1}M^{\Gamma}_{12k-2}
\end{align*}
where $z_{1,1} M_{12k-2}^{\Gamma}\subset M_{12k}^{\Gamma}$ for dimensional reasons; finally, we note that
\begin{align*}
E_3^{4k}&\equiv\left(\frac{E_1^2-1}{4}\right)^{4k}& &\mod 8D^{\Gamma}_{12k}+z_{1,1}M^{\Gamma}_{12k-2}\\
&\equiv \left(\frac{E_1^2-1}{4}\right)^{4k}v_1^4z_{1,1} & &\mod 8D^{\Gamma}_{12k}+z_{1,1}M^{\Gamma}_{12k-2}\quad{\textrm{if\ }}k\geq2 
\end{align*}
which completes the proof.\end{proof}


\begin{proof}[{\bf Proof of Theorem \ref{higher-order}, part (iv):}]\ \\
Recall that in the definition \eqref{higher_beta} we have to impose $j=m\cdot2^i\leq a_{n-i-l}$ for $n\geq3$; since the situation $m=i=1$ has already been dealt with in the previous part (iii), it is sufficient to consider the case $n=l+i+3$, $4\leq m\cdot 2^i=j\leq a_{l+2}$, where $l\geq0$, $i\geq1$. In order to compute the $f$--invariants, we calculate the effect of the map \eqref{elliptic_beta_map} w.r.t.\ $\left(2^{i+2},z_{i,m}\right)$: Since 
\begin{equation}\label{mod_higher_power}
\begin{split}
v_1^{6\cdot2^l}&=z_{i,m}v_1^{6\cdot 2^l-j}+2jv_1^{6\cdot2^l-3}v_2\\
v_1^{9\cdot2^l}&=z_{i,m}\left(v_1^{9\cdot2^l-j}+2jv_1^{9\cdot2^l-j-3}v_2\right)+4j^2v_1^{9\cdot2^l-6}v_2^2
\end{split}
\end{equation}
we calculate for $l\geq0$, $i\geq1$, and odd $s\geq1$:
\begin{align*}
x_{l+i+3}^s&\equiv v_2^{s\cdot2^{l+i+3}}+2^{i+1}v_1^{3\cdot2^l}v_2^{\left(s2^{i+3}-1\right)2^l}+\\
&\quad+3s\cdot2^iv_1^{6\cdot2^l}v_2^{\left(s2^{i+3}-2\right)2^l}& &\mod\left(2^{i+2}, v_1^{9\cdot2^l}\right)\\
&\equiv v_2^{s\cdot2^{l+i+3}}+2^{i+1}v_1^{3\cdot2^l}v_2^{\left(s2^{i+3}-1\right)2^l}& &\mod\left(2^{i+2},z_{i,m}\right)
\end{align*}
hence
\begin{align*}
x^s_{l+i+3}&\equiv E_3^{s\cdot2^{l+i+3}}+2^{i+1}E_3^{\left(s\cdot2^{i+3}-1\right)2^l}\mod2^{i+2}D^{\Gamma}_{24s\cdot2^{l+i}}+z_{i,m}\cdot M^{\Gamma}_{24s\cdot2^{l+i}-j}
\end{align*}
and due to \eqref{mod_higher_power}, application of Lemma \ref{E_3_congruences} and Lemma \ref{removal_of_second_summand} yields the claim.
\end{proof}


\bibliography{refbib_edited}
\end{document}